\newcommand{\lra}{\longrightarrow}
\newcommand{\RR}{\mathbb{R}}
\newcommand{\vep}{\varepsilon}
\newcommand{\uu}{\mathcal{U}}
\newcommand{\tuu}{\widetilde{\uu}}
\newtheorem{thm}{Theorem}
\newtheorem{lemma}{Lemma}
\newtheorem{cor}{Corollary}
\newcommand{\beqa}{\begin{eqnarray}}
\newcommand{\beq}{\begin{equation}}
\newcommand{\eeqa}{\end{eqnarray}}
\newcommand{\eeq}{\end{equation}}
\newcommand\ip[2]{\langle {#1},{#2}\rangle} 
\newcommand\vv[1]{{\boldsymbol {\it #1}}} 
\newcommand\mb{\overline{\boldsymbol m}}
\newcommand\mm{{\boldsymbol m}}
\newcommand\kk{{\boldsymbol k}}
\newcommand\xx{\vv{x}}
\newcommand\yy{\vv{y}}
\newcommand\cd[2]{\nabla_{\!#1}{#2}}
\newcommand\nb{\overline{\boldsymbol n}}
\newcommand\nn{{\boldsymbol n}}
\newcommand\Ric{\widehat{\text{Ric}}}
\DeclareMathOperator\Div{\mathrm{div}}
\thanks{Mathematics Subject Classification: 53C20, 53C22, 53C25}
\begin{document}
\title[]{On the principal Ricci curvatures of a Riemannian 3-manifold}
\author[]{Amir Babak Aazami and Charles M. Melby-Thompson}
\address{Clark University\hfill\break\indent
Worcester, MA 01610, USA}
\email{aaazami@clarku.edu}

\address{Fudan University\hfill\break\indent
Shanghai 200433, China}
\email{charlesmelby@gmail.com}

\begin{abstract}
We study global obstructions to the eigenvalues of the Ricci tensor on a Riemannian 3-manifold.  As a topological obstruction, we first show that if the 3-manifold is closed, then certain choices of the eigenvalues are prohibited: in particular, there is no Riemannian metric whose corresponding Ricci eigenvalues take the form $(-\mu,f,f)$, where $\mu$ is a positive constant and $f$ is a smooth positive function.  We then concentrate on the case when one of the eigenvalues is zero.  Here we show that if the manifold is complete and its Ricci eigenvalues take the form $(0,\lambda,\lambda)$, where $\lambda$ is a positive constant, then its universal cover must split isometrically.  If the manifold is closed, scalar-flat, and its zero eigenspace contains a unit length vector field that is geodesic and divergence-free, then the manifold must be flat.  Our techniques also apply to the study of Ricci solitons in dimension three.
\end{abstract}
\maketitle

\section{Introduction}
\label{sec:intro}
Let $(M,g)$ be a Riemannian 3-manifold.  Because the Weyl tensor vanishes in dimension three, the curvature of $(M,g)$ is completely determined by its Ricci tensor.  We can associate to this Ricci tensor a smooth endomorphism of the tangent bundle $TM$,
denoted $\Ric\colon TM \lra TM$,
whose fiberwise action $X \mapsto \Ric_p(X)$ is the unique vector satisfying 
$$
g(\,\Ric_p(X),Y)\ =\ \text{Ric}_p(X,Y)\hspace{.1in}\forall Y \in T_pM.
$$
The eigenvalues of $\Ric$ are known as the \emph{principal Ricci curvatures.}  This paper is motivated by the following question: \emph{On a Riemannian 3-manifold, are there global obstructions to the possible values of the principal Ricci curvatures?}  In addition, we attack this problem using a new tool adapted from gravitational physics: namely, the \emph{Newman-Penrose formalism} \cite{newpen62}.  Although this originated in the study of gravitational physics on Lorentzian 4-manifolds, nevertheless, as we show in this paper, it is in fact ideally suited to studying the \emph{divergence, twist,} and \emph{shear} of constant length vector fields on \emph{Riemannian} 3-manifolds, properties which we define in Section \ref{sec:formalism} below.  Therefore a secondary goal of our paper is to promote the further study of this formalism in the study of Riemannian 3-manifolds.     
\vskip 6pt
Returning to the problem of principal Ricci curvatures, the reason we are interested in the global case is because there are in fact no \emph{local} obstructions: it has been shown in \cite{spiro}, and later, by a different method, in \cite{kowalski94}, that given any three smooth functions $f_1,f_2,f_3$ on $\RR^3$, there always exists a Riemannian metric on $\RR^3$ whose corresponding principal Ricci curvatures are $(f_1,f_2,f_3)$.  As an example of a global, topological obstruction, our first result shows that this is no longer the case if the 3-manifold is closed:

\begin{thm}
\label{thm:1main}
On a closed 3-manifold $M$, there is no Riemannian metric with principal Ricci curvatures $-\mu,f,f$ when $\mu$ is a positive constant and $f$ is a positive smooth function on $M$.
\end{thm}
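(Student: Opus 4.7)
The plan is to extract from the Ricci eigenstructure a distinguished unit vector field $\xi$, derive a Riccati equation for its shape operator via the three-dimensional reconstruction of Riemann from Ricci, and integrate this against the Bochner formula applied to $\xi^\flat$ to produce a sign obstruction incompatible with $\mu > 0$.

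First, because $f \neq -\mu$ pointwise, the $-\mu$-eigenspace of the Ricci endomorphism is a smooth rank-one subbundle $L \subset TM$; passing to the (at most two-fold) orientation cover of $L$, which is again a closed $3$-manifold, I may assume $L$ admits a global unit section $\xi$, so that $\mathrm{Ric} = f g - (\mu+f)\,\xi^\flat \otimes \xi^\flat$. Computing the divergence of this decomposition and invoking the contracted second Bianchi identity $\mathrm{div}\,\mathrm{Ric} = \tfrac{1}{2}\,dR = df$ (using that $\mu$ is constant), one obtains
\[
\xi(f)\,\xi^\flat + (\mu+f)\bigl[(\mathrm{div}\,\xi)\,\xi^\flat + (\nabla_\xi\xi)^\flat\bigr] = 0.
\]
Since $\mu+f \neq 0$, projecting perpendicular to $\xi$ forces $\nabla_\xi\xi = 0$, so the integral curves of $\xi$ are unit-speed geodesics; projecting along $\xi$ yields
\[
\xi(f) + (\mu+f)\,h = 0, \qquad h := \mathrm{div}\,\xi.
\]

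Next I would use the three-dimensional reconstruction of Riemann in terms of Ricci: combined with $R_{11} = -\mu$, $R_{22} = R_{33} = f$ and $R_{23} = 0$, this gives the Jacobi operator $R(\cdot,\xi)\xi = -(\mu/2)\,I$ on $\xi^\perp$. Hence the shape operator $\bar S := \nabla\xi|_{\xi^\perp}$ satisfies the matrix Riccati equation $\nabla_\xi \bar S = (\mu/2)\,I - \bar S^2$, and by Cayley--Hamilton in two dimensions the scalar $D := \det\bar S$ obeys
\[
\xi(D) = h\bigl(\tfrac{\mu}{2} - D\bigr);
\]
equivalently, $(D - \tfrac{\mu}{2})\,\xi$ is divergence-free, so $\mathrm{sgn}(D - \tfrac{\mu}{2})$ is constant along every $\xi$-orbit. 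Separately, applying the Bochner--Weitzenböck formula to $\xi^\flat$, integrating over the closed manifold $M$, and using $\mathrm{Ric}(\xi,\xi) = -\mu$ together with the pointwise identity $|\nabla\xi|^2 - |d\xi^\flat|^2 - h^2 = -2\det\bar S$ (a direct computation in a local orthonormal frame $(\xi,e_2,e_3)$ using $\nabla_\xi\xi = 0$), one finds
\[
\int_M D\,dV = -\tfrac{\mu}{2}\,\operatorname{vol}(M) < 0.
\]

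The hardest step is closing the argument against this strict negativity. I would exploit that both $(\mu+f)\,\xi$ and $(D - \tfrac{\mu}{2})\,\xi$ are divergence-free, which produces, for every smooth $\Phi$, the moment identities
\[
\int_M \bigl[\Phi(u) - u\,\Phi'(u)\bigr]\,h\,dV = 0, \qquad u \in \bigl\{\mu+f,\ D - \tfrac{\mu}{2}\bigr\},
\]
yielding in particular $\int_M h\,dV = \int_M f\,h\,dV = \int_M (D-\tfrac{\mu}{2})^n h\,dV = 0$ for all $n \geq 2$. Combined with the orbit-invariance of $\mathrm{sgn}(D - \tfrac{\mu}{2})$ coming from the Riccati ODE, these constraints should force the region $\{D > \tfrac{\mu}{2}\}$ to contribute enough to the integral of $D$ to make the Bochner identity above inconsistent with $\mu > 0$. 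The main technical difficulty lies in performing this final sign and measure analysis, which must carefully handle the interplay between the level sets of $f$ and the recurrent structure of the $\xi$-flow on $M$; the generalization from Yamato's case of constant $f$ (where $h \equiv 0$, $\bar S$ is traceless with pointwise $\det \bar S = -\mu/2$, and the proof proceeds by identifying an explicit algebraic structure on $M$) to the present setting requires these moment identities to substitute for the rigidity that was automatic in the constant case.
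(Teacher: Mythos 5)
Your setup is sound and in fact reproduces, in frame-free language, exactly the identities the paper derives with the Newman--Penrose formalism: geodesy of $\xi$ and $\xi(f)=-(\mu+f)h$ are the consequences of the contracted/differential Bianchi identities, $\xi(h)=-h^2+2D+\mu$ is the real part of \eqref{eqn:S1}, and $\xi(D)=h\bigl(\tfrac{\mu}{2}-D\bigr)$ is precisely the paper's evolution equation \eqref{eqn:detH} for $H=\det D-\tfrac{\mu}{2}$. The Bochner identity $\int_M D\,dV=-\tfrac{\mu}{2}\operatorname{vol}(M)$ is also correct (it is just the integrated trace of your Riccati equation). The problem is the endgame, which you yourself flag as unfinished; it is a genuine gap, and the proposed direction cannot close it. Every constraint you have assembled --- the integral identity, the moment identities $\int_M[\Phi(u)-u\Phi'(u)]h\,dV=0$, and the orbit-invariance of $\mathrm{sgn}(D-\tfrac{\mu}{2})$ --- is identically satisfied by the configuration $h\equiv 0$, $D\equiv-\tfrac{\mu}{2}$: there $\xi(f)=0$, every moment identity reads $0=0$, and the Bochner identity holds with pointwise equality. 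But that configuration is exactly the hard case carrying the content of the theorem (it is forced whenever $\xi(f)=0$, in particular for constant $f$), so no formal consequence of your list of identities can exclude it. Note also that the integral identity only forces $D<\tfrac{\mu}{2}$ somewhere; nothing makes $\{D>\tfrac{\mu}{2}\}$ nonempty, so the ``contribution'' you hope to extract from that region need not exist.

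Two ingredients are missing. First, to reduce to $h\equiv 0$ you should not integrate over $M$ but solve your ODEs along a single complete integral curve of $\xi$: by $\xi(D)=h(\tfrac{\mu}{2}-D)$ the function $D-\tfrac{\mu}{2}$ is either identically zero or nowhere zero along an orbit; in the first case $\theta'=-\theta^2+2\mu$ together with $g'=-\theta g$ for $g=\mu+f$ forces $g\to 0$, contradicting that $|\mu+f|$ is bounded away from zero on the closed $M$; in the second case $\ell=1/(D-\tfrac{\mu}{2})$ satisfies $\ell''=2+2\mu\ell$ and $\ell'=\theta\ell$, and again $g=c/\ell\to 0$ unless $\ell$ is constant, whence $h\equiv 0$. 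Second --- and this is the larger omission --- once $h\equiv0$ and $D\equiv-\tfrac{\mu}{2}$, the shape operator has distinct eigenvalues $\pm\sqrt{\mu/2}$, and the contradiction requires its global eigenvector fields $\xx,\yy$ together with curvature identities in directions \emph{transverse} to $\xi$ (the paper's \eqref{eqn:S3} and \eqref{eqn:S5}), which culminate in an equation of the form $W[\operatorname{div}\yy]=-f$ for a globally defined vector field $W$; this is impossible on a closed manifold since the left side vanishes at a critical point of $\operatorname{div}\yy$ while $f$ is nowhere zero. The scalar data $(h,D,f)$ and their evolution along $\xi$ alone do not detect this obstruction, so the final ``sign and measure analysis'' cannot be carried out as proposed.
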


This generalizes a result in \cite{yamato91}, which proved the case when $f$ is constant.  In fact the signature $(-\!+\!+)$ has an interesting history.  In \cite{milnor76}, J. Milnor showed that three-dimensional Lie groups with left invariant Riemannian metrics furnish examples of manifolds with principal Ricci curvatures of globally fixed signature\,---\,\emph{except} for the signatures $(-\!+\!+)$, $(0\!+\!+)$, and $(0\!+\!-)$.  Indeed, we next examine the signature $(0+-)$, in the special case when the manifold is scalar flat, which is to say, when the nonzero principal Ricci curvatures have the same magnitude: $0,f,-f$.  The obstruction in this case concerns the vector field spanning the zero eigenspace:

\begin{thm}
\label{thm:2main}
Let $(M,g)$ be a closed, scalar flat Riemannian 3-manifold.  If the zero eigenspace of the Ricci tensor has a unit length vector field with geodesic and divergence-free flow, then $(M,g)$ is flat.
\end{thm}

To interpret Theorem \ref{thm:2main}, note that if a scalar flat Riemannian 3-manifold has zero as a principal Ricci curvature, then its only possible signatures are $(0\!+\!-)$ or $(000)$.  In fact our techniques enable us to show that if the vector field in question has geodesic flow but is not necessarily divergence-free, then the principal Ricci curvatures may be of the form $(0\!+\!-)$, but they cannot be constants.  Theorem \ref{thm:2main} is the most technical of our results, and in fact it relies in a crucial way on a very deep result from contact geometry: namely, the positive resolution of the \emph{Weinstein Conjecture} in dimension 3, by C. Taubes, in \cite{taubes07}.  Briefly, recall that a \emph{contact form} on a smooth manifold $M$ is a one-form $\theta$ such that $d\theta$ is nondegenerate on the kernel of $\theta$, where $d$ is the exterior derivative.  The \emph{Reeb vector field} of $\theta$ is a the unique vector field $X$ satisfying $\theta(X) = 1$ and $d\theta(X,\cdot) = 0$.  The Weinstein conjecture then states that on a closed manifold, the Reeb vector field of any contact form has a closed orbit.  In Theorem \ref{thm:2main}, the one-form to which we apply the Weinstein Conjecture is of the form $g(X,\cdot)$ for some unit length vector field $X$ with geodesic flow: $\nabla_{\!X} X = 0$.  In such a case, $X$ itself will be the Reeb field of this one-form.  (In fact Theorem \ref{thm:2main} is inspired by \cite{brans75}, as presented in \cite[p. 348]{o1995}, which showed that if a Ricci-flat Lorentzian 4-manifold has a diagonalizable curvature operator with a zero eigenvalue, then it is flat.)
\vskip 6pt
Our techniques allow us to also say something about the signature $(0\!+\!+)$, in the case when the two positive eigenvalues are equal.  In fact this case is distinguished from the others, for the following reason.  Let $(M,g)$ be a Riemannian 3-manifold whose principal Ricci curvatures are $(0,f,f)$, where $f$ is a smooth positive function.  Let $\kk$ be a nowhere vanishing vector field that spans the zero eigenspace.  Then it is straightforward to show that the case $(0,f,f)$ is equivalent to $R(\kk,\cdot,\cdot,\cdot) = 0$, where $R$ is the Riemann curvature 4-tensor; in particular, a 2-plane has zero sectional curvature if and only if it contains $\kk$.  This is an example of a manifold with \emph{conullity 2}.  Specifically, the \emph{nullity space} of $(M,g)$ is defined at each $p \in M$ to be the subspace $\{X \in T_pM : R_p(X,\cdot,\cdot,\cdot) = 0\}$, and the dimension of this subspace is called the nullity of $M$ at $p$.  For such a 3-manifold, our techniques allow us to recover a result that follows from results established in both \cite{szabo85} and \cite{schmidt11}.\footnote{We thank both Wolfgang Ziller and Benjamin Schmidt for bringing, respectively, \cite{szabo85} and \cite{schmidt11} to our attention.}  Namely, if $(M,g)$ is complete and $f$ is a positive constant, so that $(M,g)$ has constant positive scalar curvature, then its universal cover must split isometrically:

\begin{thm}
\label{thm:main} (\cite{szabo85,schmidt11})
Let $(M,g)$ be a complete Riemannian 3-manifold with constant positive scalar curvature.  If there exists a nowhere vanishing vector field $\kk$ satisfying $R(\kk,\cdot,\cdot,\cdot) = 0$, then the universal cover of $(M,g)$ splits isometrically as $\RR \times \widetilde{N}$.
\end{thm}

The unifying feature of these three theorems is the existence of a preferred unit length vector field: the one spanning the negative eigenspace in $(-\!+\!+)$ and the ones spanning the zero eigenspaces in $(0\!+\!\pm)$.  Our method of proof is to examine the properties of this preferred vector field using the \emph{Newman-Penrose formalism} \cite{newpen62} (see also \cite[Chapter 5]{o1995}), which was adapted to Riemannian 3-manifolds in \cite{AA14}.  Indeed, another geometric situation in which a vector field is distinguished is that of a Ricci soliton.  We therefore close our paper with a few results meant to illustrate that the Newman-Penrose formalism is naturally adapted to this setting as well.  Recall that a Riemannian manifold $(M,g)$ is a \emph{Ricci soliton} if there exists a vector field $\kk$ and a constant $\lambda$ such that
\beqa
\label{eqn:rs}
\text{Ric} + \frac{1}{2}\mathscr{L}_{\kk}\,g\ =\ \frac{\lambda}{2} g,
\eeqa
where $\mathscr{L}$ is the Lie derivative.  For a recent survey of Ricci solitons and their relation to the Ricci flow, see \cite{cao10}.  The Ricci soliton is said to be \emph{shrinking, steady,} or \emph{expanding} if, respectively, $\lambda > 0, \lambda = 0$, or $\lambda < 0$.  To begin with, a simple application of our techniques shows that requiring $\kk$ to have unit length forces a three-dimensional Ricci soliton, if nontrivial, to be shrinking:

\begin{lemma}
\label{lemma:Ricci}
Let $(M,g)$ be a Riemannian 3-manifold and $(M,g,\kk)$ a Ricci soliton.  If $\kk$ is a unit length vector field, then either the Ricci soliton is shrinking or else $\kk$ is parallel and $(M,g)$ is flat.
\end{lemma} 

Now relax the condition that $\kk$ have unit length, but place it in the kernel of the endomorphism $\Ric$.  If the Riemannian 3-manifold is closed with constant scalar curvature, then the following is true:

\begin{lemma}
\label{lemma:Ricci2}
Let $(M,g)$ be a closed Riemannian 3-manifold with constant scalar curvature $S$.  If the zero eigenspace of the Ricci tensor contains a nowhere vanishing vector field $\kk$ for which $(M,g,\kk)$ is a Ricci soliton, then $\lambda = S$.  If $S = 0$, then $(M,g)$ is flat.
\end{lemma}

In particular, if $S$ is positive or negative, then the Ricci soliton in Lemma \ref{lemma:Ricci2} must be shrinking or expanding, respectively.  Finally, if we allow for the conullity 2 condition $R(\kk,\cdot,\cdot,\cdot) = 0$, then non-Einstein Ricci solitons are guaranteed to exist by Theorem \ref{thm:main}, but the choice of 3-manifold is restricted:

\begin{cor}
\label{lemma:main3}
Let $(M,g)$ be a simply connected, complete Riemannian 3-manifold with constant positive scalar curvature $S$.  Let $\kk \in \mathfrak{X}(M)$ be nowhere vanishing and satisfy $R(\kk,\cdot,\cdot,\cdot) = 0$.  Then $(M,g)$ splits isometrically as $\RR \times S^2$ and there is a smooth function $f$ such that $(M,g,f\kk)$ is a non-Einstein shrinking Ricci soliton with $\lambda = S$.
\end{cor}

\section{Formalism and Conventions}
\label{sec:formalism}
In this section we give an overview of the Newman-Penrose formalism for Riemannian 3-manifolds, as established in \cite{AA14}.  In what follows, we use the notation ``$\ip{\,}{}$" to denote the metric $g$, and our sign convention for the Riemann tensor is
$$
R(X,Y)Z\ =\ \cd{X}{\cd{Y}Z} - \cd{Y}{\cd{X}Z} - \cd{[X,Y]}{Z}.
$$
Given a local orthonormal frame in $(M,g)$ of the form $\{\kk,\xx,\yy\}$, begin by combining $\xx$ and $\yy$ into complex-valued vector fields
\beqa
\label{eqn:m}
\mm\ :=\ \frac{1}{\sqrt{2}}(\xx - i\yy)\hspace{.2in},\hspace{.2in}\mb\ :=\ \frac{1}{\sqrt{2}}(\xx + i\yy),
\eeqa
and work with the complex triad $\{\kk,\mm,\mb\}$ in place of $\{\kk,\xx,\yy\}$ (doing this is not necessary, but it allows us to call upon equations already derived in \cite{AA14}).  Observe that $\ip{\mm}{\mm} = \ip{\mb}{\mb} = \ip{\kk}{\mm} = \ip{\kk}{\mm} = 0$, where, e.g., $\ip{\kk}{\mm} = \frac{1}{\sqrt{2}}(\ip{\kk}{\xx} - i\ip{\kk}{\yy})$, while $\ip{\mm}{\mb} = \ip{\kk}{\kk} = 1$.  Since we will need the components of both the Riemann 4-tensor and Ricci tensor with respect to a complex triad $\{\kk,\mm,\mb\}$, we observe here that the latter is given by
$$
\text{Ric}(\cdot,\cdot)\ =\ R(\kk,\cdot,\cdot,\kk) + R(\mm,\cdot,\cdot,\mb) + R(\mb,\cdot,\cdot,\mm).
$$

Next, define the following complex-valued quantities, which comprise the objects of study in the Newman-Penrose formalism:
\beqa
\label{eqn:sc}
\kappa &:=& -\ip{\cd{\kk}{\kk}}{\mm}\hspace{.2in},\hspace{.2in}\rho\ :=\ -\ip{\cd{\mb}{\kk}}{\mm}\hspace{.2in},\hspace{.2in}\sigma\ :=\ -\ip{\cd{\mm}{\kk}}{\mm},\nonumber\\
&&\hspace{.45in}\vep\ :=\ \ip{\cd{\kk}{\mm}}{\mb}\hspace{.2in},\hspace{.2in}\beta\ :=\ \ip{\cd{\mm}{\mm}}{\mb}.\label{eqn:spin}
\eeqa
These so-called \emph{spin coefficients} were first introduced for null vector fields $\kk$ on Lorentzian 4-manifolds in \cite{newpen62}.  The first three spin coefficients in \eqref{eqn:sc} are particularly important, as they encode geometric information regarding the flow of $\kk$.  For one, the flow of $\kk$ is geodesic, by which is meant that $\cd{\kk}{\kk} = 0$, if and only if $\kappa = 0$ (if $\kappa = \vep = 0$, then the vector fields $\xx$ and $\yy$ are parallel along the geodesic flow of $\kk$).  Next, denoting the divergence of $\kk$ by $\text{div}\,\kk$, the real and imaginary parts of the spin coefficient $\rho$ are
\beqa
\label{eqn:rho}
-2\rho\ =\ \text{div}\,\kk + i\,\omega,
\eeqa
where the smooth function $$\omega\ :=\ \ip{\cd{\yy}{\kk}}{\xx} - \ip{\cd{\xx}{\kk}}{\yy}$$ vanishes if and only if the normal subbundle $\kk^{\perp} \subset TM$ is integrable (this follows from Frobenius's theorem, which yields that $\kk^{\perp}$ is integrable if and only if $g(\kk,[\xx,\yy]) = 0$).  Finally, the spin coefficient
\beqa
\label{eqn:sigma}
\sigma\ =\ \frac{1}{2}\Big(\ip{\cd{\yy}{\kk}}{\yy} - \ip{\cd{\xx}{\kk}}{\xx}\Big)\ +\ \frac{i}{2}\Big(\ip{\cd{\yy}{\kk}}{\xx} + \ip{\cd{\xx}{\kk}}{\yy}\Big)
\eeqa
is the (complex) \emph{shear} associated to $\kk$'s flow: its magnitude $|\sigma|$ at any point determines whether an infinitesimal cross section of the flow deforms at that point into an ellipse of the same area.  It is straightforward to verify, for example, that $\kk$ is parallel if and only if $\kappa = \sigma = \rho = 0$.  Certainly the vanishing of $\kappa$ is independent of $\{\xx,\yy\}$; so, too, is the vanishing of $\rho$ and $\sigma$ (see, e.g., \cite[p. 327ff.]{o1995}).  Indeed, $|\sigma|^2$ is the determinant of the trace-free symmetric part of the left-hand matrix in \eqref{eqn:D1} below, while $\omega^2$ is the determinant of its skew-symmetric part.  Therefore both $|\sigma|^2$ and $\omega^2$ are frame-independent smooth functions on $M$.  Note that $\vep = i \ip{\cd{\kk}{\xx}}{\yy}$ is purely imaginary.

\vskip 12pt
Finally, let us rewrite, in terms of a local complex triad $\{\kk,\mm,\mb\}$, the components $R(\kk,\mb,\kk,\mm)$, $R(\kk,\mm,\kk,\mm)$, $R(\mb,\mm,\kk,\mm)$, $R(\kk,\mm,\mm,\mb)$, and $R(\mb,\mm,\mm,\mb)$ of the Riemann 4-tensor in terms of the spin coefficients $\kappa,\rho,\sigma,\vep,\beta$.  The resulting five curvature identities are, respectively,
\beqa
\kk(\rho) - \mb(\kappa) &=& |\kappa|^2 + |\sigma|^2 + \rho^2 + \kappa\bar{\beta} + \frac{1}{2} {\rm Ric}(\kk,\kk),\label{eqn:S1}\\
\kk(\sigma) - \mm(\kappa) &=& \kappa^2 + 2\sigma\vep + \sigma(\rho + \bar{\rho}) - \kappa \beta +{\rm Ric}(\mm,\mm),\phantom{\frac{1}{2}}\label{eqn:S2}\\
\mm(\rho) - \mb(\sigma) &=& 2 \sigma\bar{\beta} + (\bar{\rho}-\rho)\kappa + {\rm Ric}(\kk,\mm),\phantom{\frac{1}{2}}\label{eqn:S3}\\
\kk(\beta) - \mm(\vep) &=& \sigma(\bar{\kappa} - \bar{\beta}) + \kappa (\vep - \bar{\rho}) + \beta(\vep + \bar{\rho}) - {\rm Ric}(\kk,\mm)\phantom{\frac{1}{2}}\label{eqn:S4}\\
\mm(\bar{\beta}) + \mb(\beta) &=& |\sigma|^2 - |\rho|^2 -2|\beta|^2 + (\rho - \bar{\rho})\vep - {\rm Ric}(\mm,\mb) + \frac{1}{2} {\rm Ric}(\kk,\kk).\nonumber\\\label{eqn:S5}
\eeqa
We do the same with the two differential Bianchi identities
\beqa
\ \ (\cd{\kk}{R})(\kk,\mm,\mm,\mb) + (\cd{\mm}{R})(\kk,\mm,\mb,\kk) + (\cd{\mb}{R})(\kk,\mm,\kk,\mm)\ =\ 0,\phantom{\frac{1}{2}}\nonumber\\
\ \ (\cd{\kk}{R})(\mb,\mm,\mm,\mb) + (\cd{\mm}{R})(\mb,\mm,\mb,\kk) + (\cd{\mb}{R})(\mb,\mm,\kk,\mm)\ =\ 0,\nonumber
\eeqa
which then take the forms, respectively,
\beqa
&&\hspace{-.4in}\kk({\rm Ric}(\kk,\mm))\, -\, \frac{1}{2}\mm({\rm Ric}(\kk,\kk))\ +\ \mb({\rm Ric}(\mm,\mm))\ =\ \label{eqn:bid1}\\
&&\hspace{-.1in}\kappa\,{\rm Ric}(\kk,\kk)\ +\ \big(\vep + 2\rho + \bar{\rho}\big){\rm Ric}(\kk,\mm)\ +\ \sigma\,{\rm Ric}(\kk,\mb)\phantom{\frac{1}{2}}\nonumber\\
&&\hspace{1.2in} -\ \big(\bar{\kappa} + 2\bar{\beta}\big){\rm Ric}(\mm,\mm)\, -\, \kappa\,{\rm Ric}(\mm,\mb)\phantom{\frac{1}{2}}\nonumber\\
\text{and}&&\nonumber\\
&&\hspace{-.4in}\mm({\rm Ric}(\kk,\mb))\ +\ \mb({\rm Ric}(\kk,\mm))\, -\, \kk({\rm Ric}(\mm,\mb) - \frac{1}{2}{\rm Ric}(\kk,\kk))\ =\ \label{eqn:bid2}\\
&&\hspace{-.1in}(\rho+\bar{\rho})\big({\rm Ric}(\kk,\kk)-{\rm Ric}(\mm,\mb)\big)\, -\, \bar{\sigma}{\rm Ric}(\mm,\mm)\, -\, \sigma{\rm Ric}(\mb,\mb)\phantom{\frac{1}{2}}\nonumber\\
&&\hspace{1.2in} -\ \big(2\bar{\kappa} + \bar{\beta}\big){\rm Ric}(\kk,\mm)\, -\, \big(2\kappa + \beta\big){\rm Ric}(\kk,\mb).\phantom{\frac{1}{2}}\nonumber
\eeqa
(For a fuller treatment of these derivations, consult \cite{AA14}.)

\section{The signature $(-\!+\!+)$}
\label{sec:thm1}

\begin{proof}[Proof of Theorem \ref{thm:1main}]
Let $g$ be Riemannian metric on a closed 3-manifold $M$ with (globally defined) principal Ricci curvatures $-\mu,f,f$, with $\mu$ a positive number and $f$ a smooth function on $M$ that never takes the values $0,-\mu$.
Consider first the case when $M$ is simply connected.  The smooth bundle endomorphism $\Ric + \mu I\colon TM \lra TM$ has nullity one at every point, hence its kernel $X := \ker{(\Ric + \mu I)}$ is a smooth real line bundle over $M$ (see, e.g., \cite[Theorem 10.34, p. 266]{ISM}).  As $M$ is simply connected, $X$ therefore has a smooth global section $\kk\in\Gamma(X)$ of unit length.
Since $\Ric$ is self-adjoint, it admits a local orthonormal basis of eigenvectors $\{\kk,\vv{u},\vv{v}\}$ such that
\beqa
\label{eqn:ob}
\Ric(\vv{u})\ =\ f \vv{u}\hspace{.2in},\hspace{.2in}\Ric(\vv{v})\ =\ f \vv{v}.
\eeqa
It follows that with respect to the corresponding local complex triad $\{\kk,\vv{n},\overline{\vv{n}}\}$, where $\vv{n} := \frac{1}{\sqrt{2}}(\vv{u} - i\vv{v})$ (and whose corresponding spin coefficients below we denote with a subscript ``$*$"), the components of the Ricci tensor satisfy
\beqa
\label{eqn:ricci}
\text{Ric}(\kk,\vv{n})\ =\ \text{Ric}(\vv{n},\vv{n})\ =\ 0\hspace{.1in},\hspace{.1in}\text{Ric}(\kk,\kk)\ =\ -\mu\hspace{.09in},\hspace{.09in}\text{Ric}(\vv{n},\overline{\vv{n}})\ =\ f.
\eeqa
Let us first assume that $f$ satisfies $\kk(f) = 0$.  Then, inserting \eqref{eqn:ricci} into the differential Bianchi identities \eqref{eqn:bid1} and \eqref{eqn:bid2} yields
$
\kappa_* = \rho_* + \bar{\rho}_* = 0,
$
so that the flow of $\kk$ is geodesic ($\kappa_* = 0$) and divergence-free ($\rho_* + \bar{\rho}_* = 0$).  Inserting these into the curvature identity \eqref{eqn:S1}, its real part simplifies to
\beqa
\label{eqn:ev}
|\sigma_*|^2 - \frac{\omega_*^2}{4}\ =\ \frac{\mu}{2}\cdot
\eeqa
Since $|\sigma_*|^2$ and $\omega_*^2$ are frame-independent, \eqref{eqn:ev} is frame-independent and holds at each point of $M$.  We now replace the vector fields $\vv{u}$ and $\vv{v}$ with global ones more suited to the geometry, as follows.  Consider the normal subbundle $\kk^{\perp} \subset TM$ and the smooth bundle endomorphism
\beqa
\label{eqn:DD}
D\colon \kk^{\perp} \lra \kk^{\perp}\hspace{.2in},\hspace{.2in}Z\ \mapsto\ D(Z)\ :=\ \cd{Z}{\kk}.
\eeqa
(This is well-defined because $\kk$ has unit length.)  In terms of the spin coefficients $\rho_*$ and $\sigma_*$, the matrix of $D$ at a point $p$ is given by
\beqa
\left[\begin{array}{cc}
\ip{\cd{\vv{u}_p}{\kk}}{\vv{u}_p} & \ip{\cd{\vv{v}_p}{\kk}}{\vv{u}_p}\\
\ip{\cd{\vv{u}_p}{\kk}}{\vv{v}_p} & \ip{\cd{\vv{v}_p}{\kk}}{\vv{v}_p}
\end{array}\right]\ =\ 
\left[\begin{array}{cc}
-{\rm re}(\sigma_*) & \frac{\omega_*}{2} + {\rm im}(\sigma_*)\\
-\frac{\omega_*}{2} + {\rm im}(\sigma_*) & {\rm re}(\sigma_*)
\end{array}\right]\bigg|_p\cdot\label{eqn:D1}
\eeqa
By virtue of \eqref{eqn:ev}, each $D_p$ has two distinct eigenvalues $\pm\sqrt{\mu/2}$.  Therefore the two smooth bundle endomorphisms $D \pm \sqrt{\mu/2}\,I\colon \kk^{\perp} \lra \kk^{\perp}$ have nullity one at every point, in which case $X_{\pm} := \ker{(D \pm \sqrt{\mu/2}\,I)}$ admit nowhere vanishing global sections $\tilde{\xx} \in \Gamma(X_-)$ and $\tilde{\yy} \in \Gamma(X_+)$.  Now replace these with the global vector fields
\beqa
\label{eqn:V0}
\xx\ :=\ \tilde{\xx}\hspace{.2in},\hspace{.2in} \yy \ :=\ -\ip{\tilde{\xx}}{\tilde{\yy}}\,\tilde{\xx} + \tilde{\yy},\phantom{\frac{1}{2}}
\eeqa
chosen to have unit length. Writing the matrix $D$ with respect to this new global (orthonormal) frame $\{\kk,\xx,\yy\}$, whose corresponding global complex triad we denote by $\{\kk,\mm,\mb\}$, it follows from \eqref{eqn:D1} and the identity $D\xx = \sqrt{\mu/2}\,\xx$ that its spin coefficient $\sigma$ (not to be confused with $\sigma_*$ above, though of course $|\sigma_*|^2 = |\sigma|^2$) satisfies $\text{re}(\sigma) = -\sqrt{\mu/2}$ and $\text{im}(\sigma) = \omega/2$.  To summarize, then, we have shown that there exists a global complex triad $\{\kk,\mm,\mb\}$ whose corresponding spin coefficients $\kappa$, $\rho$, and $\sigma$ satisfy
\beqa
\label{eqn:rs1}
\kappa\ =\ 0\hspace{.2in},\hspace{.2in}\rho\ =\ -i\,\frac{\omega}{2}\hspace{.2in},\hspace{.2in}\sigma\ =\ -\sqrt{\frac{\mu}{2}} + i\,\frac{\omega}{2}\cdot
\eeqa
(Being geodesic ($\kappa = 0$) and divergence-free ($\rho + \bar{\rho} = 0$) is, of course, independent of the complex triad used.)  The virtue of this particular complex triad is the form of its shear $\sigma$ in \eqref{eqn:rs1}, which nicely simplifies the curvature identities \eqref{eqn:S1}, \eqref{eqn:S2}, \eqref{eqn:S3}, and \eqref{eqn:S5} above.  Indeed, inserting \eqref{eqn:rs1} into the imaginary part of \eqref{eqn:S1} yields $\kk(\omega) = 0$, which immediately implies that $\kk(\sigma) = 0$.  This in turn simplifies \eqref{eqn:S2} to $2\sigma\vep = -\text{Ric}(\mm,\mm)$, which yields $\omega\, \vep = 0$.  Next, inserting \eqref{eqn:rs1} into \eqref{eqn:S3} yields real and imaginary parts
\beqa
\label{eqn:xw}
\xx(\omega)\ =\ 2\sqrt{\frac{\mu}{2}}\,\text{div}\,\yy - \omega\,\text{div}\,\xx\hspace{.2in},\hspace{.2in}\sqrt{\frac{\mu}{2}}\,\text{div}\,\xx + \frac{\omega}{2}\,\text{div}\,\yy\ =\ 0,
\eeqa
where $\beta = \frac{1}{\sqrt{2}}\left(\ip{\cd{\yy}{\xx}}{\yy} + i \ip{\cd{\xx}{\xx}}{\yy}\right) = \frac{1}{\sqrt{2}} (\text{div}\,\xx - i\,\text{div}\,\yy)$ (the latter because $\cd{\kk}{\kk} = 0$).  Finally, \eqref{eqn:S5} simplifies via \eqref{eqn:ev} and $\omega\,\vep = 0$ to
$$
\xx(\text{div}\,\xx) + \yy(\text{div}\,\yy)\ =\ -(\text{div}\,\xx)^2 - (\text{div}\,\yy)^2 - f,\phantom{\frac{1}{2}}
$$
which in turn further simplifies, via \eqref{eqn:xw}, to
\beqa
\label{eqn:V}
\bigg(\!\!-\!\frac{\omega}{\sqrt{2\mu}}\,\xx + \yy\bigg)(\text{div}\,\yy)\ =\ -f.
\eeqa
But this is impossible on a closed manifold with $f$ nowhere vanishing.  This completes the proof in the case that $M$ is simply connected.  If $M$ is not simply connected, then pass to its universal cover $\pi\colon (\widetilde{M},\tilde{g}) \lra (M,g)$, which has principal Ricci curvatures $-\mu,f \circ \pi,f \circ \pi$.  Repeating our argument on $(\widetilde{M},\tilde{g})$ with corresponding global orthonormal basis of eigenvectors $\{\vv{K},\vv{X},\vv{Y}\}$, we once again arrive at \eqref{eqn:V}.  Although $(\widetilde{M},\tilde{g})$ need not be compact, we still obtain a contradiction, given that $\text{div} \vv{Y}$ and $f \circ \pi$ are bounded in $\widetilde{M}$, and the vector field in \eqref{eqn:V} is complete.  The reason is because $d\pi(\vv{Y})$ is equal to $\yy$ in \eqref{eqn:V0} up to sign; but as $|\text{div}\,\yy|$ is a continuous function on $M$, it is bounded (observe that $\yy$ in \eqref{eqn:V0}, though in general defined only locally when $M$ is not simply connected, is nonetheless unique up to sign).  Hence $\text{div} \vv{Y}$ is bounded in $\widetilde{M}$, which contradicts \eqref{eqn:V} on $\widetilde{M}$ (written with respect to $\{\vv{K},\vv{X},\vv{Y}\}$).

\vskip 12pt
Now suppose that $f$ is a smooth function on $M$ that never takes on the values $0,-\mu$, but that it does not necessarily satisfy $\kk(f) = 0$.  Recalling $D$ in \eqref{eqn:DD} and \eqref{eqn:D1} above (with $\text{div}\,\kk$ reinstated), begin by defining the function
\beqa
\label{eqn:H}
H\ :=\ 
\text{det}\,D - \frac{\mu}{2}
\ =\ 
\frac{\omega^2}{4} - |\sigma|^2 + \frac{(\text{div}\,\kk)^2}{4} - \frac{\mu}{2} \ .
\eeqa
Even though $f$ is no longer assumed to be constant, observe that the first differential Bianchi identity \eqref{eqn:bid1} nonetheless yields $\kappa = 0$, so that $\kk$ still has geodesic flow (the second differential Bianchi identity \eqref{eqn:bid2} now yields $\kk(f) = -(\text{div}\,\kk)(\mu+f)$, to which we will return later).  With $\kappa = 0$, the real and imaginary parts of \eqref{eqn:S1} are, respectively,
\beqa
\label{eqn:S1c}
\kk(\text{div}\,\kk)\ =\ 2H - (\text{div}\,\kk)^2 + 2\mu\hspace{.2in},\hspace{.2in}\kk(\omega)\ =\ -(\text{div}\,\kk)\,\omega,
\eeqa
while \eqref{eqn:S2}, via the identity $\vep + \bar{\vep} = 0$ and the fact that $\text{Ric}(\mm,\mm) = 0$ in a local complex triad satisfying \eqref{eqn:ob} (with $f$ in place of $\lambda$), implies
$
\kk(|\sigma|^2) = -2(\text{div}\,\kk)\,|\sigma|^2.
$
(Since $|\sigma|^2$ is frame-independent, so is this equation.)  These three equations combine to yield the following evolution equation for $H$ along the flow of $\kk$:
\beqa
\label{eqn:detH}
\kk(H)\ =\ -(\text{div}\,\kk)\,H.
\eeqa
This equation implies that along any integral curve $\gamma(s)$ of $\kk$, either $H \circ \gamma$ is nowhere zero or else it vanishes identically.  
We claim that $H$ cannot vanish; for if it does, inserting $\theta(s) := (\Div\kk\circ\gamma)(s)$ into equation \eqref{eqn:S1c} yields
\beqa
\label{eqn:ray3}
\frac{d\theta}{ds}\ =\ -\theta^2 + 2\mu\hspace{.2in}\forall s \in \RR.
\eeqa
The complete (non-singular) solutions to \eqref{eqn:ray3} are the constant solutions $\theta(s) = \pm\sqrt{2\mu}$ and (up to a constant shift of $s$) 
\beqa
\label{eqn:tanh}
\theta(s)\ =\ \sqrt{2\mu}\tanh(\sqrt{2\mu}\,s) .
\eeqa
The second differential Bianchi identity \eqref{eqn:bid2} implies that $g(s) := (\mu+f)\circ\gamma$ satisfies $g'=-\theta\, g$, and inserting \eqref{eqn:tanh} yields the general solution 
\beqa
g(s)\ =\ g_0 / \cosh(\sqrt{2\mu}s) \,,\nonumber
\eeqa
which contradicts $g>\mu$.  Similarly, the constant solutions $\theta(s) = \pm\sqrt{2\mu}$ yield $g(s) = g_0\,e^{\mp \sqrt{2\mu}\,s}$, and hence also contradict $g > \mu$.
Thus $H$ vanishes nowhere on $M$.
Next, consider the function $\ell(s) := ((1/H)\circ \gamma)(s)$; then \eqref{eqn:detH} and the first equation in \eqref{eqn:S1c} together yield
$$
\frac{d^2\ell}{ds^2}\ =\ 2 +2\mu\,\ell\hspace{.2in}\forall s \in \RR,
$$
whose general solution is $\ell(s) = -1/\mu + c_1\,e^{\sqrt{2\mu}\,s} + c_2\,e^{-\sqrt{2\mu}\,s}$.  
This means that either $\ell$ is constant along $\gamma$, or diverges as $s$ goes to (at least one of) $\pm\infty$.
Consider the latter case.
It is straightforward to show that $\ell(s)$ satisfies $\ell'=\theta\,\ell$, which together with $g'=-\theta g$ implies that $g=c/\ell$ for some constant $c$.
But this contradicts $g>\mu$, so we conclude that $H$ must be a nonzero constant along all integral curves of $\kk$.
Then by \eqref{eqn:detH} $\Div\kk=0$, which implies that $\kk(f)=0$; but the first part of the proof showed that this is impossible.
\end{proof}

\section{The signature $(0\!+\!+)$}
\label{sec:thm}
\begin{proof}[Proof of Theorem \ref{thm:main}]
The principal Ricci curvatures of $(M,g)$ are $0,f,f$, with $f$ not assumed to be constant.  For $\kk$ as given and any local complex triad $\{\kk,\mm,\mb\}$, the Ricci tensor satisfies
\beqa
\label{eqn:leq}
\text{Ric}(\kk,\kk)\ =\ \text{Ric}(\kk,\mm)\ =\ \text{Ric}(\mm,\mm)\ =\ 0\hspace{.15in},\hspace{.15in}\text{Ric}(\mm,\mb)\ =\ \frac{S}{2},
\eeqa
where $S$ is the scalar curvature of $(M,g)$ which, by assumption, is positive and satisfies $\kk(S) = 0$.  We now show that $\kk$ is parallel: $\kappa = \rho = \sigma = 0$.  Inserting \eqref{eqn:leq} into the differential Bianchi identities \eqref{eqn:bid1} and \eqref{eqn:bid2} yields $\kappa = (\text{div}\,\kk) = 0$ (we remark here in passing that these also follow from the contracted Bianchi identity).  These in turn simplify the real part of \eqref{eqn:S1} to
$$
\frac{\omega^2}{4} - |\sigma|^2\ =\ 0.
$$
Now we proceed as in the proof of Thereom \ref{thm:2main}: $D$ will have zero determinant everywhere, with matrix given by
\beqa
\label{eqn:matrix1}
\left[\begin{array}{cc}
0 & \omega(p)\\
0 & 0
\end{array}\right]\cdot\nonumber
\eeqa
We will thus look to work in a frame satisfying 
\beqa
\label{eqn:sw}
\sigma\ =\ i\,\frac{\omega}{2}\ =\ \bar{\rho},
\eeqa
the difference here being that we do not know that $\omega$ is nowhere vanishing.  Now, if $\omega = 0$, then $\rho = \sigma = 0$ and we are done.
Our strategy is thus to show that the open subset
$$
\uu \ :=\ \{p \in M : \omega(p) \neq 0\} 
$$
is empty.
Clearly it suffices to prove this for each connected component, so we may assume $\uu$ is connected.  We first consider the case where $\uu$ is simply connected.  The map $D$ has constant rank 1 in $\uu$, hence $X = \ker{D}\vert_{\uu}$ and its orthogonal complement in $\kk^\perp\vert_{\uu}$, $Y$, are smooth real line bundles over $\uu$.  As $\uu$ is simply connected they have smooth global sections $\xx\in\Gamma(X)$ and $\yy\in\Gamma(Y)$ of unit length, which together with $\kk$ form an orthonormal frame on $T\uu$ satisfying
$
\cd{\xx}{\kk} = 0.
$
With respect to this frame, the quantities $\rho$ and $\sigma$ satisfy \eqref{eqn:sw}.
Using $\kappa = \text{Ric}(\kk,\mm) = 0$, \eqref{eqn:S3} simplifies to $\xx(\omega) = -\sqrt{2}\,\omega\,\bar{\beta}$, whose real and imaginary parts of are, respectively,
\beqa
\label{eqn:x22}
\xx(\omega)\ =\ -(\text{div}\,\xx)\,\omega\hspace{.2in},\hspace{.2in}\ip{\cd{\xx}{\xx}}{\yy}\,\omega\ =\ 0.\phantom{\frac{1}{2}}
\eeqa
Observe that because $\omega$ is nowhere vanishing in $\uu$, the second equation in \eqref{eqn:x22} gives $\cd{\xx}{\xx} = 0$, so that the flow of $\xx$ is everywhere geodesic. 
The imaginary part of \eqref{eqn:S1} yields a similar equation for $\kk$, $\kk(\omega) = -(\text{div}\,\kk)\,\omega = 0$, and since $\kk(\sigma) = (i/2)\,\kk(\omega)$, \eqref{eqn:S2} reduces to $\omega\,\vep = 0$, hence $\vep = 0$.
Finally, substituting $\beta = \bar{\beta} = \frac{1}{\sqrt{2}}\text{div}\,\xx, \vep = 0$, and $|\sigma|^2 = |\rho|^2$ into \eqref{eqn:S5} yields simply
\beqa
\label{eqn:rayx1}
\xx(\text{div}\,\xx)\ =\ -(\text{div}\,\xx)^2 - \frac{S}{2}\cdot
\eeqa
(Compare \eqref{eqn:rayx1} with \eqref{eqn:rayx1a} in the proof of Theorem \ref{thm:2main} above.)  Now let $\gamma\colon [0,b) \lra \uu$ be an integral curve of $\xx$ that is maximally extended to the right.  We claim that $b=\infty$.  
Indeed, suppose $b$ is finite.
Because $\cd{\xx}{\xx}=0$, $\gamma$ is a geodesic, and thus right-extendible in $M$ by completeness.
It follows that for $b$ finite the limit $\lim_{s\to b}\gamma(s)$ exists (in $M$), and is not in $\uu$.
Setting $\theta(s) := (\text{div}\,\xx\circ \gamma)(s)$ and $\theta_0 := \theta(0)$, the first equation in \eqref{eqn:x22} yields 
\beqa
\label{eqn:wbd}
(\omega \circ \gamma)(s)\ =\ \omega_0\,e^{-\!\int_0^s \theta(u)\,du}\hspace{.2in}\forall s \in [0,b), 
\eeqa
where, without loss of generality, $\omega_0 := \omega(\gamma(0))$ can be chosen to be positive by an appropriate choice of $\yy$.  
By \eqref{eqn:rayx1}, $\theta(s)$ is strictly decreasing (recall that $S$ is positive), so that $\theta(s) < \theta_0$ for all $s \in (0,b)$.  Hence \eqref{eqn:wbd} satisfies
\beqa
\label{eqn:wlb}
(\omega \circ \gamma)(s)\ >\ \omega_0\,e^{-\theta_0s}\hspace{.2in}\forall s \in (0,b) .
\eeqa

As $\uu = \{p \in M : \omega(p) \neq 0\}$ does not contain $\lim_{s\to b}\gamma(s)$, we conclude that $\lim_{s\to b}(\omega\circ\gamma)(s)=0$.
This contradicts \eqref{eqn:wlb}; hence $b$ must be infinite.  
Repeating the argument for $-\xx$ implies that the flow of $\xx$ is complete in $\uu$.  With that established, let $\gamma$ be an integral curve of $\xx$ and set $h(s) := 1/(\omega \circ \gamma)(s)$.  Then \eqref{eqn:rayx1} and the first equation in \eqref{eqn:x22} combine to yield
$$
\frac{d^2 h}{ds^2}\ =\ -\frac{S(\gamma(s))}{2}h(s)\ <\ 0\hspace{.2in}\forall s \in \RR,
$$
where the inequality is due to $S > 0$ and $h > 0$.  
Any positive function $h$ on $\RR$ with $h''<0$ lies below its tangent lines, so that it must either cross zero at some point, or be everywhere constant, which is impossible as $h'' < 0$.   The only way to avoid this contradiction is for $\uu$ to be empty.  

\vskip 12pt

We now return to the general case when $\uu$ is connected but not simply connected.
Lifting to the Riemannian universal covering space $\pi\colon\tuu\lra\uu$, the lifts $\widetilde{X}$ and $\widetilde{Y}$ of $X$ and $Y$ have global unit length sections $\tilde{\xx}$ and $\tilde{\yy}$, and $\kk$ lifts to a unit length vector field $\tilde\kk$.
The argument proceeds as before: any integral curve of $\tilde\xx$ is a geodesic $\tilde\gamma$ in $\tuu$, which projects to a geodesic $\gamma=\pi\circ\tilde\gamma$ in $\uu$.
Completeness of $M$ applied to $\gamma$ once again implies that the flow of $\tilde\xx$ is complete in $\tuu$.
The argument of the previous paragraph then gives a contradiction unless $\tuu$ is empty.  This completes the proof that $\kk$ is parallel.
Finally, observe that the universal cover of $(M,g)$ splits isometrically as $\RR \times \widetilde{N}$; this follows from the de Rham decomposition theorem (see, e.g., \cite[Theorem 56, p. 253]{petersen} and also \cite{schmidt14}).  Note that when the scalar curvature is constant, then $\widetilde{N} = S^2$.
\end{proof}

\section{The signature $(0\!+\!-)$}
\label{sec:thm2}

\begin{proof}[Proof of Theorem \ref{thm:2main}]
Consider first when $(M,g)$ is complete with globally constant principal Ricci curvatures $0,\lambda,-\lambda$; we then assume the existence of a unit length vector field $\kk \in \mathfrak{X}(M)$ with geodesic flow ($\cd{\kk}{\kk} = 0$) and satisfying $\Ric(\kk) = 0$.  Let $\{\kk,\xx,\yy\}$ be a local orthonormal basis of eigenvectors of $\Ric$ such that
\beqa
\label{eqn:sfricci}
\Ric(\xx)\ =\ \lambda\xx\hspace{.2in},\hspace{.2in}\Ric(\yy)\ =\ -\lambda\yy.\nonumber
\eeqa
If $\{\kk,\mm,\mb\}$ is the corresponding local complex triad, then $\text{Ric}(\mm,\mm) = \lambda$ and all other components are zero (with respect to $\{\kk,\mm,\mb\}$).  Accordingly, the differential Bianchi identities \eqref{eqn:bid1} and \eqref{eqn:bid2} yield, respectively,
\beqa
\label{eqn:bidbid}
\beta\ =\ 0\hspace{.2in},\hspace{.2in}\sigma + \bar{\sigma}\ =\ 0,
\eeqa
while the curvature identities \eqref{eqn:S1} and \eqref{eqn:S2} reduce to 
\beqa
\kk(\rho) &=& |\sigma|^2 + \rho^2,\label{eqn:S1b}\phantom{\frac{1}{2}}\\
\kk(\sigma)&=& 2\sigma\vep + \sigma(\rho + \bar{\rho}) + \lambda.\phantom{\frac{1}{2}}\label{eqn:S2b}
\eeqa
Since $\beta = 0$ and $\sigma$ is imaginary, an additional equation is provided via \eqref{eqn:S5}, which reads
\beqa
\label{eqn:S5b}
|\sigma|^2 - |\rho|^2 - i\,\omega \vep\ =\ 0.
\eeqa
Now suppose that $\omega$ is zero at a point $p$ in the domain of $\{\kk,\mm,\mb\}$; since the imaginary part of \eqref{eqn:S1b} yields $\kk(\omega) = -(\text{div}\,\kk)\,\omega$ as usual, $\omega$ must vanish along the (complete) integral curve $\gamma(s)$ of $\kk$ through $p$.  Then by \eqref{eqn:S5b}, $|\sigma|^2 = |\rho|^2 = (1/4)(\text{div}\,\kk)^2$ everywhere along $\gamma$ (recall that both $|\sigma|^2$ and $|\rho|^2$ are frame-independent), so that the real part of \eqref{eqn:S1b} gives
\beqa
\label{eqn:Raylast}
\frac{d\theta}{ds}\ =\ -\theta^2,
\eeqa
where, as before, we have set $\theta(s) = (\text{div}\,\kk \circ \gamma)(s)$; because $\kk$ has complete flow, $\theta$ is defined for all $s \in \RR$.  Then \eqref{eqn:Raylast} implies that $\theta$ is either identically zero or else strictly positive, but in fact the second case cannot occur: if $\theta > 0$, then $1/\theta(s) = s + b$, a contradiction.  On the other hand, if $\theta(s) \equiv 0$, then $\lambda = 0$ by \eqref{eqn:S2b}, a contradiction once again.  Thus we conclude that in fact $\omega$ must be nowhere vanishing on $M$; i.e., that
$
\{p \in M : \omega(p) \neq 0\} = M.
$
(We mention in passing that this is equivalent to the 1-form $g(\kk,\cdot)$ being a contact form on $M$; see below.)  Next, observe that
\beqa
\label{eqn:two2}
\kk(|\sigma|^2)\ =\ -2(\text{div}\,\kk)|\sigma|^2\hspace{.2in},\hspace{.2in}\kk(|\rho|^2)\ =\ -(\text{div}\,\kk) (|\sigma|^2 + |\rho|^2),\phantom{\frac{1}{2}}
\eeqa
the former via \eqref{eqn:bidbid}, \eqref{eqn:S2b} and $\vep + \bar{\vep} = 0$, the latter via \eqref{eqn:S1b} (recall that $|\sigma|^2$ and $|\rho|^2$ are frame-independent; also, note that these equations can be defined everywhere along a given (geodesic) integral curve $\gamma(s)$ of $\kk$, by parallel translating two orthonormal vectors $x,y \in \kk_{\gamma(0)}^{\perp}$ along $\gamma$ and writing down \eqref{eqn:two2} with respect to their parallel translates).  Armed with these, as well as with $\kk(\omega) = -(\text{div}\,\kk)\,\omega$, the derivative of \eqref{eqn:S5b} along $\kk$ simplifies to give
$
-i\,\omega \kk(\vep) = 0,
$
hence that $\kk(\vep) = 0$ everywhere in the domain of $\{\kk,\mm,\mb\}$.  In fact the spin coefficient $\vep = i \ip{\cd{\kk}{\xx}}{\yy}$ is a constant here.  This follows from setting $\beta = \kappa = \text{Ric}(\kk,\mm) = 0$ in \eqref{eqn:S4}, to obtain $\mm(\vep) = 0$, and hence that $\vep$ is a constant: $\vep = i c$.  The significance of this fact is seen by taking the real part of \eqref{eqn:S2b} to obtain
\beqa
\label{eqn:vep}
-i c\,(\sigma - \bar{\sigma})\ =\ \lambda 
\eeqa
where we have used the fact that $\sigma$ is imaginary.  In other words, the shear $\sigma$ is also a constant in the domain of $\{\kk,\mm,\mb\}$; since $|\sigma|^2$ is frame-independent, it follows that $|\sigma|^2$ is a global constant on $M$.  If this constant is zero, then by \eqref{eqn:S2b} we have $\lambda = 0$, a contradiction.  Since it is nonzero, the first equation in \eqref{eqn:two2} dictates that $\text{div}\,\kk = 0$,
so that $\rho + \bar{\rho} = 0$.  With this established, the real part of \eqref{eqn:S1b} now gives
$
|\sigma|^2 + \rho^2\ =\ |\sigma|^2 - |\rho|^2 = 0.
$
But this combines with \eqref{eqn:S5b} to yield $\omega\,\vep = 0$, hence that $\vep = i c = 0$, hence that $\lambda = 0$ by \eqref{eqn:vep}, a contradiction once again.  Thus $\lambda$ cannot be constant.
\vskip 12pt
Now suppose that $M$ is closed and that $\kk$, in addition to having geodesic flow, is also divergence-free, but relax the condition that the nonzero principal Ricci curvatures are constants.  For clarity, let us write them now as $\pm f$, where $f > 0$ is a smooth function on $M$.  Once again, observe that with respect to any local complex triad $\{\kk,\mm,\mb\}$,
\beqa
\label{eqn:simple}
\kappa\ =\ \rho + \bar{\rho}\ =\ \text{Ric}(\kk,\cdot)\ =\ \text{Ric}(\mm,\mb)\ =\ 0.
\eeqa
Set $\psi := \text{Ric}(\mm,\mm)$; then $\psi$ is nowhere vanishing (because $f$ is so) and also the only nonzero component of the Ricci tensor with respect to $\{\kk,\mm,\mb\}$.  Inserting \eqref{eqn:simple} into the second differential Bianchi identity \eqref{eqn:bid2}, as well as into the curvature identities \eqref{eqn:S1} and \eqref{eqn:S2}, yields, respectively,
\beqa
\sigma \bar{\psi} + \bar{\sigma} \psi &=& 0,\label{eqn:bid2a}\phantom{\frac{1}{2}}\nonumber\\
\kk(\rho) &=& |\sigma|^2 + \rho^2,\label{eqn:S1a}\phantom{\frac{1}{2}}\\
\kk(\sigma)&=& 2\sigma\vep + \psi.\phantom{\frac{1}{2}}\label{eqn:S2a}
\eeqa
Then just as with the first equation in \eqref{eqn:two2}, this time with $\text{div}\,\kk = 0$,
\beqa
\label{eqn:magshear}
\kk(|\sigma|^2)\ =\ 0.
\eeqa
Along a given integral curve $\gamma(s)$ of $\kk$, set $h(s) := (|\sigma|^2 \circ \gamma)(s)$.  If $h(0) = 0$, then by \eqref{eqn:magshear} $h$ is everywhere zero, in which case $\psi \circ \gamma = 0$ by \eqref{eqn:S2a}, contradicting the fact that it is nowhere vanishing.  Thus $h > 0$ along $\gamma$ and so $|\sigma|^2 > 0$ on $M$; because the real part of \eqref{eqn:S1a} is $|\sigma|^2 - \omega^2/4 = 0$, it follows that the global smooth function $\omega^2$ is nowhere vanishing on $M$.  Consider now the determinant $H$ of the bundle endomorphism $D$ given by \eqref{eqn:DD} above; a computation shows that $H = \omega^2/4 - |\sigma|^2 + (\text{div}\,\kk)^2/4 = 0$.  Because $\omega$ is nowhere vanishing, it follows that $D$ has nullity one at every point of $M$.  Assume that $M$ is simply connected; then the smooth line bundle $X := \ker{D}$ has a global unit section $\xx \in \Gamma(X)$, which satisfies
$
\cd{\xx}{\kk} = 0.
$
Similarly, the orthogonal complement of $X$ in $\kk^{\perp}$, $Y$, is a smooth line bundle, so it, too, has a global unit section $\yy \in \Gamma(Y)$.  Let $\{\kk,\mm,\mb\}$ denote the complex triad associated to the global orthonormal frame $\{\kk,\xx,\yy\}$.  Then its spin coefficients $\rho$ and $\sigma$ in particular satisfy
$
\sigma = i\,\frac{\omega}{2} = \bar{\rho}
$
(this follows from \eqref{eqn:rho}, \eqref{eqn:sigma}, and the fact that $\text{div}\,\kk = 0$).  Note that $\omega$ is now a nowhere vanishing smooth function globally defined on $M$; we can, by considering $-\xx$ if necessary, assume that $\omega > 0$.  
Next, using $\kappa = \text{Ric}(\kk,\mm) = 0$, \eqref{eqn:S3} simplifies to $\xx(\omega) = -\sqrt{2}\,\omega\,\bar{\beta}$,
whose real and imaginary parts are, respectively,
\beqa
\label{eqn:x22a}
\xx(\omega)\ =\ -(\text{div}\,\xx)\,\omega\hspace{.2in},\hspace{.2in}\ip{\cd{\xx}{\xx}}{\yy}\,\omega\ =\ 0.\phantom{\frac{1}{2}}
\eeqa
Observe that because $\omega$ is nowhere vanishing, it follows from the second equation in \eqref{eqn:x22a} that $\cd{\xx}{\xx} = 0$, so that the flow of $\xx$, like that of $\kk$, is everywhere geodesic (furthermore, $\beta$ is real). 
The imaginary part of \eqref{eqn:S1a} yields $\kk(\omega) = 0$, hence $\kk(\sigma) = (i/2)\,\kk(\omega) = 0$, so that \eqref{eqn:S2a} yields $\psi = -i\omega\,\vep$; it follows that $\psi = \text{Ric}(\mm,\mm)$ is real in the frame $\{\kk,\mm,\mb\}$.
Finally, consider \eqref{eqn:S5}.  
Substituting $\beta = \bar{\beta} = \frac{1}{\sqrt{2}}\text{div}\,\xx, \psi = -i\,\omega\,\vep$, and $|\sigma|^2 = |\rho|^2$ into \eqref{eqn:S5} yields simply
\beqa
\label{eqn:rayx1a}
\xx(\text{div}\,\xx)\ =\ -(\text{div}\,\xx)^2 + \psi.
\eeqa
Let $\gamma$ be an integral curve of $\xx$ and set $\ell(s) := 1/(\omega \circ \gamma)(s) > 0$.  Then \eqref{eqn:rayx1a} and the first equation in \eqref{eqn:x22a} combine to yield
\beqa
\label{eqn:ell}
\frac{d^2 \ell}{ds^2}\ =\ \psi(\gamma(s))\,\ell(s)\hspace{.2in}\forall s \in \RR.
\eeqa
This implies that $\psi \circ \gamma$ must be positive, for otherwise $\ell(s) > 0$ is incompatible with $\ell''(s) < 0$ (note that $\psi$ can never be zero at any point, for then so would $f$).  Since $\text{Ric}(\mm,\mb) = 0$ implies $\text{Ric}(\yy,\yy) = -\text{Ric}(\xx,\xx)$, observe that $\text{Ric}(\xx,\xx) = \text{Ric}(\mm,\mm) = \psi > 0$ on $M$; since the latter is closed, it follows that $\text{Ric}(\xx,\xx) \geq b$ for some positive constant $b$.  The significance of this can be seen by considering the analogue of the curvature identity \eqref{eqn:S1} \emph{for $\xx$} (rather than $\kk$).  In other words, for the complex triad $\{\xx,\nn,\nb\}$, with $\nn := \frac{1}{\sqrt{2}}(\kk-i \yy)$ (and whose corresponding spin coefficients we distinguish with a superscript ``${\sim}$", noting that $\tilde{\kappa} = 0$ because $\xx$ has geodesic flow), one obtains the pair of equations
$$
\xx(\text{div}\,\xx)\ =\ \frac{\tilde{\omega}^2}{2} - 2|\tilde{\sigma}|^2 - \frac{(\text{div}\,\xx)^2}{2} - \text{Ric}(\xx,\xx)\hspace{.15in},\hspace{.15in}\xx(\tilde{\omega})\ =\ -(\text{div}\,\xx)\,\tilde{\omega}.
$$
But because $\text{Ric}(\xx,\xx) \geq b$, these equations together imply that $\tilde{\omega}$ is nowhere vanishing on $M$, hence that $g(\xx,\cdot)$ is a contact form, because
$$
dg(\xx,\cdot)(\kk,\yy)\ =\ -\tilde{\omega}.
$$
Furthermore, since $\xx$ has unit length and geodesic flow, it is the Reeb vector field of $g(\xx,\cdot)$ (i.e., the unique vector field satisfying $g(\xx,\xx) = 1$ and $\xx \lrcorner\,dg(\xx,\cdot) = 0$).  By the Weinstein conjecture in dimension three \cite{taubes07}, it follows that $\xx$ has a \emph{closed} integral curve $\gamma(s)$.  But on closed $\gamma$ we cannot everywhere have $\ell''(s)>0$, in contradiction with \eqref{eqn:ell}.  This completes the proof when $M$ is simply connected.

\vskip 12pt
If $M$ is not simply connected, then pass to the finite-sheeted cover $\pi\colon (\widetilde{F},\tilde{g}) \lra (M,g)$ trivializing the line bundles $X$ and $Y$, which is compact with principal Ricci curvatures $0,f \circ \pi, -f \circ \pi$.  Repeating our argument on $(\widetilde{F},\tilde{g})$, the proof is complete.
\end{proof}

\section{Ricci solitons and the Newman-Penrose Formalism}
\begin{proof}[Proof of Lemma \ref{lemma:Ricci}]
In fact this result is purely local: we need only stipulate
that for any $p \in M$, there be a neighborhood $\uu$ of $p$ and a unit length vector field $\kk$ defined on $\uu$ such that $(\uu,g,\kk)$ satisfies \eqref{eqn:rs}.
Let $\{\kk,\mm,\mb\}$ be a local complex tetrad about $p$.
Then a computation shows that the components of the Ricci tensor with respect to $\{\kk,\mm,\mb\}$ satisfy
\beqa
\text{Ric}(\kk,\kk) &=& \frac{\lambda}{2},\nonumber\\
\text{Ric}(\kk,\mm) &=& \frac{\kappa}{2},\nonumber\\
\text{Ric}(\mm,\mb) &=& \frac{\rho +\bar{\rho}}{2} + \frac{\lambda}{2},\nonumber\\
\text{Ric}(\mm,\mm) &=& \sigma.\nonumber
\eeqa
Along with \eqref{eqn:S1} (and its complex conjugate), inserting these identities into the second differential Bianchi identity \eqref{eqn:bid2} yields
$
|\kappa|^2 + |\rho|^2 + |\sigma|^2 = \frac{\lambda}{4},
$
so that $\lambda \geq 0$.  If $\lambda = 0$, then $\kappa = \rho = \sigma = 0$ if and only if $\kk$ is parallel, hence in particular $\text{Ric} = \mathscr{L}_\kk\,g = 0$.  Thus if the Ricci soliton is nontrivial, it must satisfy $\lambda > 0$ in $\uu$. 
\end{proof}

\begin{proof}[Proof of Lemma \ref{lemma:Ricci2}]
Given a nowhere vanishing $\tilde{\kk}$ in the kernel of the Ricci transformation for which $(M,g,\tilde{\kk})$ is a Ricci soliton, set $\tilde{\kk}/|\tilde{\kk}| := \kk$ and let $\{\kk,\mm,\mb\}$ be a local complex tetrad.  Then $\text{Ric}(\kk,\cdot) = \text{Ric}(\mm,\mb) = 0$,
and a computation shows that the components of the Ricci tensor with respect to $\{\kk,\mm,\mb\}$ now satisfy
\beqa
0\ =\ \text{Ric}(\kk,\kk) &=& \frac{\lambda}{2} - \kk(|\tilde{\kk}|),\label{eqn:lambda5}\\
0\ =\ \text{Ric}(\kk,\mm) &=& -\frac{1}{2} (\mm(|\tilde{\kk}|) - |\tilde{\kk}| \kappa),\nonumber\\
\frac{S}{2}\ =\ \text{Ric}(\mm,\mb) &=& \frac{1}{2}(\lambda - |\tilde{\kk}| (\text{div}\,\kk)),\label{eqn:div5}\\
\text{Ric}(\mm,\mm) &=& |\tilde{\kk}| \sigma\cdot\label{eqn:sigma5}
\eeqa
Inserting these into the second differential Bianchi identity \eqref{eqn:bid2} gives
\beqa
\label{eqn:sc5}
|\tilde{\kk}| |\sigma|^2\ =\ (\text{div}\,\kk)\frac{S}{4}\cdot
\eeqa
(Constant scalar curvature is used here.)  Combined with \eqref{eqn:div5}, this gives
\beqa
\label{eqn:lambdaS}
\frac{(\lambda - S)S}{4}\ =\ |\tilde{\kk}|^2 |\sigma|^2\ \geq\ 0.\nonumber
\eeqa
Since $|\tilde{\kk}| (\text{div}\,\kk)$ is a constant, its derivative along $\kk$ yields, via \eqref{eqn:lambda5}
and \eqref{eqn:div5},
$$
\kk(\text{div}\,\kk)\ =\ -\frac{\lambda(\lambda - S)}{4|\tilde{\kk}|^2}\cdot
$$
If $\lambda \neq S$, then $\kk(\text{div}\,\kk)$ is bounded away from zero (via $|\tilde{\kk}|$), a contradiction on compact $M$.  Thus we must have $\lambda = S$.  If $S = \lambda = 0$, then $\sigma = 0$ by \eqref{eqn:sc5}; combined with \eqref{eqn:sigma5}, the manifold is therefore flat.
\end{proof}

\begin{proof}[Proof of Corollary \ref{lemma:main3}]
By assumption, there is a unit length vector field $\kk$ satisfying $R(\kk,\cdot,\cdot,\cdot) = 0$, hence $R(f\kk,\cdot,\cdot,\cdot) = 0$ for any smooth function $f$ on $M$.  Because $(M,g)$ is connected, complete, and has constant positive scalar curvature, it follows from Theorem \ref{thm:main} that $\nabla \kk = 0$.  In particular, for any local complex tetrad $\{\kk,\mm,\mb\}$, its spin coefficients satisfy $\kappa = \rho = \sigma = 0$ and \eqref{eqn:leq}.
If $(M,g,f\kk)$ is to be a Ricci soliton for a suitably chosen $f$ (which need not be nowhere vanishing), then in the domain of $\{\kk,\mm,\mb\}$ the 2-tensor $(\lambda/2)g - (1/2)\mathscr{L}_{f\kk}\,g$
must equal the Ricci tensor, hence we must show that the following equations hold:
\beqa
\frac{\lambda}{2} - \kk(f) &=& \text{Ric}(\kk,\kk)\ =\ 0,\label{eqn:lambda5a}\nonumber\\
\mm(f) - f \kappa &=& \text{Ric}(\kk,\mm)\ =\ 0,\phantom{\frac{1}{2}}\label{eqn:kappa5}\nonumber\\
\lambda - f (\text{div}\,\kk) &=& \text{Ric}(\mm,\mb)\ =\ S,\label{eqn:div5a}\phantom{\frac{1}{2}}\\
f \sigma &=& \text{Ric}(\mm,\mm)\ =\ 0.\phantom{\frac{1}{2}}\label{eqn:sigma5a}
\eeqa
Since $\sigma = 0$, \eqref{eqn:sigma5a} is satisfied; since $\text{div}\,\kk = 0$, so is \eqref{eqn:div5a}, provided that we set $\lambda = S$ (recall that the scalar curvature is a constant).  Since $\kappa = 0$, we must now show that $f$ can be chosen to satisfy $\kk(f) = S/2$ and $\mm(f) = 0$.

Since $\nabla \kk = 0$ and $(M,g)$ is complete and simply connected, the existence of $f$ is guaranteed by the de Rham decomposition theorem, which says that $(M,g)$ splits isometrically as $\RR \times S^2$, with $\kk$ defining the vertical direction.  For such an $f$, $(M,g,f\kk)$ is then a non-Einstein shrinking Ricci soliton with $\lambda = S$ (non-Einstein because $(\mathscr{L}_{f\kk}\,g)(\kk,\kk) = S/2 \neq 0$).
\end{proof}

\section*{Acknowledgements}
This work was supported by the World Premier International Research Center Initiative (WPI), MEXT, Japan; this manuscript first appeared when both authors were members of  the Kavli Institute for the Physics and Mathematics of the Universe (IPMU), University of Tokyo, Japan.
We kindly thank Benjamin Schmidt, Wolfgang Ziller, and Robert Bryant for very helpful discussions.

\bibliographystyle{siam}
\bibliography{A-MB_AdvGeom}
\end{document}